\theoremstyle{plain}
\newtheorem{theorem}{Theorem}[section]
\newtheorem{lemma}[theorem]{Lemma}
\newtheorem{proposition}[theorem]{Proposition}
\newtheorem{corollary}[theorem]{Corollary}
\theoremstyle{definition}
\newtheorem{definition}[theorem]{Definition}
\newtheorem{remark}[theorem]{Remark}
\newtheorem{example}[theorem]{Example}
\begin{document}
	
	\title{ On bi-ideals of ordered full transformation semigroups}
	
			\author[Minnumol P. K. and P. G. Romeo 	]{Minnumol P. K. and P. G. Romeo}
			
	\address{Department of Mathematics\\
		Cochin University of Science and Technology(CUSAT)\\Kochi, Kerala, India, 682022}
	\email{$pkminnumol@gmail.com,\, romeo_-parackal@yahoo.com$}
	
	\thanks{First author wishes to thank Cochin University of Science And Technology for providing financial support.\\
		Second author wishes to thank Council of Scientific and Industrial Research(CSIR) INDIA, for providing financial support.
		 }
	\subjclass{20M17, 20M12, 06F05}
	\keywords{ordered semigroup, bi-ideals, natural partial order, full transformation semigroup, Green's relations.}
	
	\begin{abstract}
	In this paper we describe the Green's relations on the semigroup of bi-ideals of ordered full transformation semigroup $ \mathcal{B}(\mathcal{T}_{X}) $ in terms of Green's relations  of ordered full transformation semigroup $ \mathcal{T}_{X} $.
	\end{abstract}
\maketitle
		
The bi-ideals of semigroups were introduced by R. A. Good and D.R. Hughes in 1952. It is a special case of the (m,n)-ideal introduced by S. Lajos who gave a characterization of semigroup by the set of their bi-ideals ( see cf.\cite{good},  \cite{mnideal},  \cite{lajos}). Further N. Kehayopulu studied the concept of bi-ideals in the case of ordered semigroups in \cite{kehayo biideal} and  \cite{mallick} S. Mallick and K. Hansda introduced a semigroup $\mathcal{B}(S)$ of all bi-ideals of an ordered semigroup $ S $ and they gave different charecterizations of $\mathcal{B}(S)$ for different regular subclasses of $ S $. In this paper we describe the Green's relations on the semigroup of bi-ideals of an ordered full transformation semigroup.

\section{Preliminaries}
First we recall definitions and results needed in the sequel.\\
 An \textit{ordered semigroup} 
$ (S, \cdot , \leq) $ is a poset $ (S, \leq) $ as well as a semigroup $ (S, \cdot) $ such that for any $ a, b, x \in {S},\, a \leq b $ implies $ xa \leq xb $ and $ ax \leq bx $. For $ A, B \subseteq S , \quad  AB = \{ab\, |\, a \in{A} \, \text{and} \,b \in{B}\} $ and 
$$ (A]  = \{x \in{S}\,|\, ( \exists \, a \in{A})\,\,\text{and}\, \,x \leq a\}$$
 is called \textit{downward closure} of $ A $.

\begin{definition}
Let $ S$ be an ordered semigroups. A nonempty subset $ A $ of $ S $ is called a \textit{left (right) ideal } of $ S $ if $ SA \subseteq A \,( AS \subseteq A) $ and $ (A]  = A $ and $A$ is called an \textit{ideal} of $ S $ if it is both a left and right ideal of $ S $.	
\end{definition}
	
\begin{definition} (see \cite{kehayo poni})	A nonempty subset $ A $ of an ordered semigroup $ S $ is called a 
	\textit{ bi-ideal } of $ S $ if $ ASA \subseteq A $ and $ (A]  = A $. A bi-ideal $ A $ of $ S $ is called a \textit{subidempotent bi-ideal } if $ A^{2} \subseteq A. $ 
\end{definition}
\begin{definition}
An ordered semigroup $ S $ is called \textit{regular} if for every $ a \in{S} $	there exists $ x \in{S} $	such that $ a \leq axa $ or equivalenty $ a \in (aSa] $. 
	An ordered semigroup $ S $ is called \textit{intra-regular} if for every $ a \in{S} $	there exists $ x ,y\in{S} $	such that $ a \leq xa^{2}y $ or equivalenty $ a \in (Sa^{2}S] $.	
\end{definition}
If $ S $ is an ordered regular semigroup then the bi-ideals and the subidempotent bi-ideals are the  same.

\begin{definition} (see \cite{mallick}) Let $ (S, \cdot , \leq) $ be an ordered semigroup and  $\mathcal{B}(S)$ be the collection all bi-ideals of $ S $. Then $\mathcal{B}(S)$ together with the the binary operation defined by 
	$$ A * B = (AB] $$ 
	is a semigroup and is called the semigroup of bi-ideals of the ordered semigroup $ (S, \cdot , \leq) $.
\end{definition}
The semigroup of bi-ideals $\mathcal{B}(S)$ of an ordered semigroup $ S $ is significant in the study of structure of the semigroup as is evident from the following theorems.
 \begin{theorem}\cite{mallick}
 	Let $ S $ be an ordered semigroup. Then $ S $ is regular if and only if the semigroup $\mathcal{B}(S)$ of all bi-ideals is regular.
 \end{theorem}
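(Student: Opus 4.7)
The plan is to prove each implication by interpreting regularity through principal bi-ideals. I would first record the identification of the smallest bi-ideal of $S$ containing a given element $a \in S$ as
$B(a) = (a \cup aSa]$;
this is a short direct verification using the bi-ideal axiom $BSB \subseteq B$ together with downward closure, and it serves as the bridge between elementwise regularity of $S$ and regularity at the bi-ideal level.

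For the forward direction, suppose $S$ is regular. I would first note that $S$ itself lies in $\mathcal{B}(S)$, since $SSS \subseteq S$ and $(S] = S$. Fix an arbitrary $A \in \mathcal{B}(S)$; the inclusion $A * S * A = (ASA] \subseteq (A] = A$ is immediate from the bi-ideal axiom on $A$. Conversely, regularity of $S$ gives, for each $a \in A$, some $x \in S$ with $a \leq axa \in ASA$, so $A \subseteq (ASA]$. Combining, $A = A * S * A$, and $S$ acts as a uniform regularity witness for every element of $\mathcal{B}(S)$.

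The converse is the substantive direction. Given $a \in S$, I would apply regularity of $\mathcal{B}(S)$ to $B(a)$ to obtain a bi-ideal $X$ with $B(a) = (B(a)\, X\, B(a)]$; in particular $a \leq u x v$ for some $u, v \in B(a)$ and $x \in X \subseteq S$. One then splits into the four cases determined by whether $u \leq a$ or $u \leq a t_1 a$ for some $t_1 \in S$, and whether $v \leq a$ or $v \leq a t_2 a$ for some $t_2 \in S$. In each case, compatibility of $\leq$ with the multiplication reshapes $uxv$ as an element of $aSa$; for instance, if $u \leq at_1 a$ and $v \leq at_2 a$, then $a \leq (at_1 a)\, x\, (at_2 a) = a(t_1 a x a t_2)a \in aSa$. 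Hence $a \in (aSa]$, proving $S$ regular.

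The only genuine obstacle is the choice of test bi-ideal in the backward direction: it must be the \emph{principal} bi-ideal $B(a)$, because applying regularity of $\mathcal{B}(S)$ to a larger bi-ideal does not localize the resulting inequality around $a$. Once that commitment is made, both directions are mechanical: the forward direction reduces to the observation that $S$ itself is available as a uniform quasi-inverse inside $\mathcal{B}(S)$, and the backward direction reduces to a four-case check in which every product collapses into $aSa$ by a single application of order-compatibility.
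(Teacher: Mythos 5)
Your proof is correct. The paper only cites this theorem from Mallick and Hansda and contains no proof of its own, so there is nothing to compare against line by line; your route (taking $S$ itself as the uniform inner witness via $A*S*A=(ASA]=A$ in the forward direction, and in the converse applying regularity of $\mathcal{B}(S)$ to the principal bi-ideal $B(a)=(a\cup aSa]$ and collapsing the four cases $u\leq a$ or $u\leq at_{1}a$, $v\leq a$ or $v\leq at_{2}a$ into $aSa$ by order-compatibility and transitivity) is the standard argument and is sound. The one step you use silently is the identity $((A]B]=(AB]=(A(B]]$, which is what lets you rewrite $A*S*A$ as $(ASA]$ and $B(a)*X*B(a)$ as $(B(a)XB(a)]$; it is a one-line check, but it is worth stating explicitly since both directions of your argument pass through it.
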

 
\begin{theorem}{\cite{mallick}}{\label{b}}
	An ordered semigroup $ S $ is both regular and intra-regular if and only if $ \mathcal{B}(S) $ is a band.
\end{theorem}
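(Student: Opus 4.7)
The plan is to prove the two implications separately. The forward direction reduces to a direct substitution argument using the two hypotheses on an arbitrary bi-ideal; the backward direction passes through the explicit form of the principal bi-ideal and a small case analysis.

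For the forward direction, assume $S$ is both regular and intra-regular, and let $A \in \mathcal{B}(S)$. I would show $(A^2] = A$ by establishing both containments. For $(A^2] \subseteq A$: given $a, b \in A$, regularity of $b$ supplies $x \in S$ with $b \leq bxb$, so $ab \leq a(bx)b \in ASA \subseteq A$; taking downward closure gives $(A^2] \subseteq (A] = A$. For $A \subseteq (A^2]$: given $a \in A$, pick $x \in S$ with $a \leq axa$ and $y, z \in S$ with $a \leq ya^2z$. Iterating regularity yields $a \leq axa \leq ax(axa) = axaxa$, and replacing the central $a$ of $axaxa$ by the larger element $ya^2z$ then gives
\[a \leq ax(ya^2z)xa = (axya)(azxa),\]
with both $axya = a(xy)a$ and $azxa = a(zx)a$ lying in $ASA \subseteq A$. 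Hence every bi-ideal is idempotent in $\mathcal{B}(S)$, so $\mathcal{B}(S)$ is a band.

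For the backward direction, assume $\mathcal{B}(S)$ is a band, and fix $a \in S$. I would first identify the principal bi-ideal as $B(a) = (\{a\} \cup \{a^2\} \cup aSa]$, verifying routinely from the bi-ideal axioms (the element $a^2$ is included separately because bi-ideals need not be subsemigroups). Applying the band hypothesis to $B(a)$ gives $a \in (B(a)B(a)]$, so $a \leq b_1 b_2$ where each $b_i$ is bounded above by $a$, by $a^2$, or by $a p_i a$ for some $p_i \in S$. A nine-case analysis on the pair $(b_1, b_2)$ shows that $a$ is bounded above by a term of one of the shapes $a^k$ with $2 \leq k \leq 4$, or $a^k q a$, $a p a^k$, or $a p a^2 q a$ (with suitable $p, q \in S$ and powers). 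In each of the five ``mixed'' shapes, one reads off regularity by factoring the bound as $a \cdot (\cdot) \cdot a$ with middle factor in $S$, and intra-regularity either directly (e.g., $a p a^3 = (ap) a^2 a$) or after one self-substitution (e.g., $a \leq a^2 q a$ gives $a \leq (a^2 q) a^2 (qa)$).

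The main obstacle lies in the ``pure power'' cases $a \leq a^k$ for $k \in \{2,3,4\}$, where the bound has no factor from $S$ that can serve directly as a middle term. The remedy is iteration: multiplying $a \leq a^k$ by suitable powers of $a$ on either side and chaining inequalities produces higher-power bounds such as $a \leq a^3$ and $a \leq a^5$. Then $a \leq a \cdot a \cdot a$ delivers regularity, while $a \leq a \cdot a^2 \cdot a^2$ (or $a \leq a \cdot a^2 \cdot a$ when directly available) delivers intra-regularity. Combining all nine cases shows that every $a \in S$ is both regular and intra-regular, completing the proof.
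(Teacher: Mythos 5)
Your proof is correct, but note that the paper does not prove this statement at all: it is quoted from Mallick and Hansda \cite{mallick}, so there is no in-paper argument to compare yours against. Your forward direction is the standard one (regularity gives $A^2\subseteq A$, hence $(A^2]\subseteq A$, and the chain $a\leq axa\leq axaxa\leq ax(ya^{2}z)xa=(a(xy)a)(a(zx)a)$ gives $A\subseteq(A^2]$), and your backward direction via the band identity applied to a bi-ideal containing $a$, with the case analysis and the self-substitution/iteration tricks, checks out: every case does yield bounds of the forms $a\leq axa$ and $a\leq xa^{2}y$. One small inaccuracy: under this paper's (Kehayopulu-style) definition, bi-ideals are not required to be subsemigroups, and the principal bi-ideal is $B(a)=(a\cup aSa]$, without the extra generator $a^{2}$; the set $(\{a\}\cup\{a^{2}\}\cup aSa]$ you use is still a bi-ideal containing $a$, so your argument is unaffected, but calling it the principal bi-ideal is not quite right, and working with $(a\cup aSa]$ would shrink your nine cases to four (the only ``pure power'' case being $a\leq a^{2}$, handled by the same iteration $a\leq a^{2}\leq a^{3}\leq a^{4}=a\,a^{2}\,a$).
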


The principal left ideal, right ideal, ideal and bi-ideal generated by $ a \in{S} $ are denoted by $ L(a), R(a), I(a), B(a) $ respectively and defined as follows:
\begin{center}
	$ L(a) = (a \cup Sa] $\\
	$ R(a) = (a \cup aS] $\\
	$ I(a) = (a \cup Sa \cup aS \cup SaS] $\\
	$ 	B(a) = (a \cup aSa] $ 
\end{center}
The \textit{Green's relations}  $ \mathscr{L} ,\mathscr{R}, \mathscr{J}, \mathscr{H}, \mathscr{D}  $ on an ordered semigroup $ S $ are given as follows (see\cite{kehayo greens})
\begin{center}
	$ a\,\mathscr{L} \,b $ if and only if $ L(a) = L(b) $\\
	$ a\,\mathscr{R} \,b $ if and only if $ R(a) = R(b) $\\
	$ a\,\mathscr{J} \,b $ if and only if $ I(a) = I(b) $\\
	$ \mathscr{H} = \mathscr{L} \cap \mathscr{R} $\\
	$ \mathscr{D} = \mathscr{L} \vee \mathscr{R} $
\end{center}

\subsection{Green's relations on an ordered regular semigroup}

In a regular ordered semigroup $ S $ we have a particularly useful way of looking at the equivalences $ \mathscr{L} $ and $ \mathscr{R} $. If $ S $ is an ordered regular semigroup, for each $ a \in{S} $ there exist $ x \in{S} $ such that $ a \leq axa$, hence $ a \in (Sa] $ and 
$ a \in (aS] $ and so the Green's equivalence for  ordered regular semigroups are:
\begin{center}
	$ a\,\mathscr{L} \,b $ if and only if $ (Sa]  = (Sb] $\\
	$ a\,\mathscr{R} \,b $ if and only if $ (aS]  = (bS] $\\

\end{center}
\begin{proposition}{\label{1}}
	Let $ a $ and $ b $ be elements of an ordered regular semigroup $ S $. Then 	$ a\,\mathscr{L} \,b $ if and only if there exists $ x,y \in{S} $ such that $ a \leq xb $ and $ b \leq ya $. Also $ a\,\mathscr{R} \,b $ if and only if there exists $ u,v \in{S} $ such that $ a \leq ub $ and $ b \leq va $.
\end{proposition}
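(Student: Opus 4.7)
The plan is to exploit the characterization of $\mathscr{L}$ (and $\mathscr{R}$) in an ordered regular semigroup noted immediately above the proposition, namely $a\,\mathscr{L}\,b \iff (Sa]=(Sb]$. So the problem reduces to translating the equality of these two downward-closures into the existence of the elements $x,y$ (respectively $u,v$). I will prove only the $\mathscr{L}$-statement in detail; the $\mathscr{R}$-statement is completely analogous with sides swapped.

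For the forward direction, suppose $a\,\mathscr{L}\,b$, i.e.\ $(Sa]=(Sb]$. Regularity gives $a\le axa$, so taking $x':=ax\in S$ we get $a\le x'a\in Sa$, hence $a\in(Sa]=(Sb]$. By definition of downward closure there exists $c\in Sb$ with $a\le c$, and writing $c=xb$ for some $x\in S$ yields $a\le xb$. Symmetrically, $b\in(Sb]=(Sa]$ produces $y\in S$ with $b\le ya$.

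For the converse, assume $a\le xb$ and $b\le ya$ for some $x,y\in S$. I will show $(Sa]\subseteq(Sb]$; the reverse inclusion is the same argument with the roles of $a,b$ interchanged. If $z\in(Sa]$, then $z\le sa$ for some $s\in S$; using compatibility of $\le$ with multiplication on the left, $sa\le s(xb)=(sx)b$, so $z\le (sx)b\in Sb$, whence $z\in(Sb]$. Thus $(Sa]=(Sb]$ and so $a\,\mathscr{L}\,b$.

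There is no real obstacle here: the only technical point is making sure one invokes the compatibility of the order with multiplication (built into the definition of ordered semigroup) at the right place, and remembering that regularity is what allows the principal left ideal $L(a)=(a\cup Sa]$ to be collapsed to $(Sa]$ so that $a$ itself lies in $(Sa]$ and can be pushed into $(Sb]$. The $\mathscr{R}$-part follows by the left-right dual argument, using $a\le axa$ to get $a\in(aS]$ and $R(a)=(aS]$ in an ordered regular semigroup.
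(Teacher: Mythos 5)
Your proof is correct and follows essentially the same route as the paper: reduce $a\,\mathscr{L}\,b$ to $(Sa]=(Sb]$, extract $x,y$ from the memberships $a\in(Sb]$, $b\in(Sa]$, and for the converse push an element $z\le sa$ to $z\le (sx)b$ using compatibility of the order with multiplication. You are in fact slightly more careful than the paper, which asserts $a\in(Sb]$ as "clear" where you justify it via regularity ($a\le axa$, so $a\in(Sa]$).
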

\begin{proof}
	Suppose that $ a\,\mathscr{L} \,b $ then by definition $ (Sa]  = (Sb] $. Clearly $ a \in (Sb] $ and $ b \in (Sa] $, which implies that $ a \leq xb $ and $ b \leq ya $ for some $ x, y \in{S} $.\\
	Conversely suppose that there exists $ x,y \in{S} $ such that $ a \leq xb $ and $ b \leq ya $. Let $ t \in (Sa] $, then $ t\leq sa $ for some $ s\in{S} $ and $ t \leq sxb \,\, \Rightarrow\,\, t \in (Sb]  $. Now let $ t' \in (Sb] $, then $ t'\leq s'b $ for some $ s'\in{S} $ and $ t \leq s'ya \,\, \Rightarrow\,\, t \in (Sa]  $. Hence $ (Sa]  = (Sb] $. So $ a\,\mathscr{L} \,b $. Similarly we can prove that $ a\,\mathscr{R} \,b $ if and only if there exists $ u,v \in{S} $ such that $ a \leq ub $ and $ b \leq va $.
\end{proof}

\section{Equivalence relation on semigroup of bi-ideals-$\mathcal{B}(S)$}

Let $ (S, \cdot , \leq) $ be an ordered semigroup and $\mathcal{B}(S)$ be the semigroup of  bi-ideals of $ S $. Next we proceed to define certain relations on the semigroup of bi-ideals of the ordered semigroup $S$ by making use of the Green's relations on $ S $ which turns out to be equivalence relations on $\mathcal{B}(S)$ and we call them as Green's relations on  $\mathcal{B}(S)$.

\begin{proposition}{\label{a}}
Let $ S $ be an ordered  semigroup and $\mathcal{B}(S)$ be the semigroup of bi-ideals of $ S $. For $ A,B \in \mathcal{B}(S) $, define the relation $\mathscr{L'}$  on $\mathcal{B}(S)$ by 
$ A \, \mathscr{L'} \, B \, \text {if and only if for each}\,  a \in{A}\,\text{there exists some }\, \, b \in{B} \,\text{such that}\,\,a\, \mathscr{L}_{S} \, b$  and vice-versa, 
 where $ \mathscr{L}_{S} $ is the Green's $ \mathscr{L} $ relation on $ S $. Then $\mathscr{L'} $ is an equivalence relation on $\mathcal{B}(S)$.
\end{proposition}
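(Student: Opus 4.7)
The statement asserts that the relation $\mathscr{L'}$ on $\mathcal{B}(S)$ is an equivalence relation, and since its definition is built directly on the Green's relation $\mathscr{L}_{S}$ on $S$ (which is itself an equivalence), the natural strategy is to lift each of the three properties (reflexivity, symmetry, transitivity) from $\mathscr{L}_{S}$ to $\mathscr{L'}$ by the obvious element-wise matching. The plan is therefore to verify the three properties in order, in each case using the corresponding property of $\mathscr{L}_{S}$ pointwise on elements of bi-ideals.

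First, for reflexivity, I would take any $A \in \mathcal{B}(S)$ and observe that for every $a \in A$ the element $a$ itself lies in $A$ and satisfies $a \, \mathscr{L}_{S} \, a$; therefore the matching condition holds trivially in both directions, giving $A \, \mathscr{L'} \, A$. Next, for symmetry, I would note that the very formulation of $\mathscr{L'}$ already imposes the condition in both directions (the ``and vice-versa'' clause), so if $A \, \mathscr{L'} \, B$ then swapping the roles of $A$ and $B$ in the definition yields $B \, \mathscr{L'} \, A$ with no additional work.

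Transitivity is the only step requiring a short argument. Suppose $A \, \mathscr{L'} \, B$ and $B \, \mathscr{L'} \, C$. Given any $a \in A$, I would first use $A \, \mathscr{L'} \, B$ to obtain some $b \in B$ with $a \, \mathscr{L}_{S} \, b$, and then use $B \, \mathscr{L'} \, C$ to obtain some $c \in C$ with $b \, \mathscr{L}_{S} \, c$. Since $\mathscr{L}_{S}$ is transitive on $S$, I conclude $a \, \mathscr{L}_{S} \, c$, proving one direction of the matching condition for $A$ and $C$. The symmetric statement, starting from an arbitrary $c \in C$ and matching back through some $b' \in B$ to some $a' \in A$, is handled identically. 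Hence $A \, \mathscr{L'} \, C$.

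No real obstacle is anticipated; the only place one might stumble is in the transitivity step if one forgot that $\mathscr{L}_{S}$ is itself an equivalence on $S$, but that is given. The whole proof essentially reduces to the remark that ``pointwise matching with respect to an equivalence relation is an equivalence relation,'' and no feature specific to bi-ideals or to the ordered structure is used beyond the fact that $A, B, C$ are nonempty subsets of $S$ (which is part of the definition of bi-ideal).
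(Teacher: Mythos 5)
Your proof is correct and follows essentially the same route as the paper: reflexivity via $a\,\mathscr{L}_{S}\,a$, symmetry from the symmetric form of the definition, and transitivity by chaining the matched elements through $B$ and invoking transitivity of $\mathscr{L}_{S}$. No gaps.
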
 

\begin{proof} Let $ A \, \mathscr{L'} \, B $. Then for each $ a \in{A} $ there
exists $ b \in{B} $ such that $ a\, \mathscr{L}_{S} \, b  $ and for each $ b' \in{B} $ there exists $ a' \in{A} $ such that $ a'\, \mathscr{L}_{S} \, b'  $. 
Consider $ A, B, C \in \mathcal{B}(S) $, clearly $ A \, \mathscr{L'} \, A $ and when $ A \, \mathscr{L'} \, B$ then $B \, \mathscr{L'} \, A $ ie., $\mathscr{L'}$ is both  reflexive and symmetric. Again for 
 $ A \, \mathscr{L'} \, B $ and $ B \, \mathscr{L'} \, C $, for each $ a \in{A} $ there is a $ b \in{B} $ such that $   a\, \mathscr{L}_{S} \, b  $ and for each $ b \in{B} $ there exists $ c \in{C} $ such that $   b\, \mathscr{L}_{S} \, c  $. By the transitivity of $ \mathscr{L}_{S} $ we have $   a\, \mathscr{L}_{S} \, c  $. i.e. for each $ a \in{A} $ there exists $ c \in{C} $ such that $   a\, \mathscr{L}_{S} \, c  $. Similarly it is seen that for each $ c' \in{C} $ there exists $ a' \in{A} $ such that $   a'\, \mathscr{L}_{S} \, c'  $. Hence $ A \, \mathscr{L'} \, C $ and $\mathscr{L'} $ is an equivalence relation on $\mathcal{B}(S)$.
\end{proof}

\begin{proposition}{\label{2}}
	Let $ S $ be an ordered regular semigroup and $\mathcal{B}(S)$ the semigroup of bi-ideals of $ S $. Let $\mathscr{L}_{\mathcal{B}(S)} $ is the Green's $\mathscr{L} $- relation on $\mathcal{B}(S)$ and $\mathscr{L'} $ is the equivalence relation  defined in Proposition \ref{a}. Then $ \mathscr{L}_{\mathcal{B}(S)} \subseteq \mathscr{L'} $.
\end{proposition}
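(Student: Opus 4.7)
The plan is to unfold $A\,\mathscr{L}_{\mathcal{B}(S)}\,B$ into a concrete inclusion statement inside $S$, and then, for each $a\in A$, to fabricate a single element of $B$ that lies in the same $\mathscr{L}_S$-class as $a$. Since $S$ is regular, the theorem of Mallick--Hansda recalled above tells us that $\mathcal{B}(S)$ is itself a regular (ordered, by $\subseteq$) semigroup, so Proposition~\ref{1} applied inside $\mathcal{B}(S)$ yields bi-ideals $X,Y\in\mathcal{B}(S)$ with $A\subseteq X*B=(XB]$ and $B\subseteq Y*A=(YA]$.

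Now fix $a\in A$. The inclusion $A\subseteq (XB]$ supplies $x\in X\subseteq S$ and $b_0\in B$ with $a\le xb_0$, and regularity of $S$ gives $s\in S$ with $a\le asa$. The key step---where I expect the main obstacle to sit---is to produce a single element of $B$ that serves as the $\mathscr{L}_S$-partner of $a$. I propose to take $b:=b_0\,s\,a$. This is not manifestly an element of $B$, but from $a\le xb_0$ one obtains
\[
b_0sa \;\le\; b_0s(xb_0) \;=\; b_0\,(sx)\,b_0 \;\in\; BSB \;\subseteq\; B,
\]
and because $B=(B]$ is downward closed in $S$, this forces $b\in B$. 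This ``absorption into $B$'' via the bi-ideal property $BSB\subseteq B$ combined with down-closedness is really the crux of the argument.

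With $b$ in hand, the equivalence $a\,\mathscr{L}_S\,b$ is then routine via Proposition~\ref{1}: on the one hand $a\le asa\le(xb_0)sa=x(b_0sa)=xb$ gives $a\le xb$ with $x\in S$, and on the other hand $b=(b_0s)a$ is already of the form $va$ with $v=b_0s\in S$. Running the entirely symmetric construction starting from $B\subseteq(YA]$ shows that for every $b\in B$ there is a corresponding $a\in A$ with $b\,\mathscr{L}_S\,a$. Combining the two directions delivers $A\,\mathscr{L'}\,B$, which establishes the desired inclusion $\mathscr{L}_{\mathcal{B}(S)}\subseteq\mathscr{L'}$.
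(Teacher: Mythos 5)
Your argument is correct, and its key step is genuinely different from (and more explicit than) the paper's. The paper also begins from the equivalent condition $A=X*B=(XB]$, $B=Y*A=(YA]$, but then finishes by contradiction in one terse line: assuming some $a\in A$ is $\mathscr{L}_S$-related to no $b\in B$, it asserts this contradicts the existence of $X$ or $Y$. That assertion glosses over the real difficulty: $A=(XB]$ only gives $a\le xb_0$ for \emph{some} $b_0\in B$, while $B=(YA]$ dominates $b_0$ by some element of $A$ that need not be $a$ itself, so the contradiction is not immediate. Your construction $b:=b_0sa$ (with $a\le asa$ from regularity of $S$) is exactly what closes that gap: the absorption $b_0sa\le b_0(sx)b_0\in BSB\subseteq B$ together with $B=(B]$ puts $b$ in $B$, and then $a\le asa\le xb$ and $b\le (b_0s)a$ give $a\,\mathscr{L}_S\,b$ via Proposition \ref{1}; the symmetric construction handles the other direction. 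So your proof is a direct, constructive argument where the paper's is an indirect sketch, at the price of using regularity of $S$ essentially (note the paper later remarks the inclusion holds even without regularity, which your route would not give). Two small points worth tightening: the paper treats $\mathcal{B}(S)$ as a plain semigroup, so $\mathscr{L}_{\mathcal{B}(S)}$ there comes with equalities $A=X*B$, $B=Y*A$ over $\mathcal{B}(S)^1$; your appeal to Proposition \ref{1} inside $(\mathcal{B}(S),*,\subseteq)$ instead presupposes the ordered reading and tacitly uses that $*$ is monotone with respect to $\subseteq$. Either reading yields the inclusions $A\subseteq (XB]$, $B\subseteq (YA]$ that you actually use, but you should say which one you take, and dispose of the trivial case $A=B$ (where $X,Y$ may fail to exist in the plain reading) by reflexivity of $\mathscr{L'}$.
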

\begin{proof}
	For $ A, B $ be any two bi-ideals of $ S $ and let $ (A,B) \in \mathscr{L}_{\mathcal{B}(S)}  $. Then by the equivalent condition of the Green's $\mathscr{L} $- relation, there exists $ X,Y \in {\mathcal{B}(S)} $ such that $ A = X * B$ and $ B = Y * A $. i.e $ A = (XB]  $ and $ B = (YA] $.\\
	 If possible $ (A,B) \notin \mathscr{L'} $. Then either there exists $ a \in A $ such that $ (a,b) \notin \mathscr{L}_{S} \, \forall \, b \in{B}  $, or there exists $ b \in{B} $ such that $ (a,b) \notin \mathscr{L}_{S} \, \forall \, a \in{A}  $. Suppose that there is an  $ a \in A $ such that $ (a,b) \notin \mathscr{L}_{S} \, \forall \, b \in{B}  $, then for any $ b \in B $, there doesnot exists $ x $ or $ y \in S $ such that $ a \leq xb $ and $ b \leq ya $, which is a contradiction to the existence of either $ X $ or $ Y $. Hence $ (A,B)  \in \mathscr{L'} $ and so $ \mathscr{L}_{\mathcal{B}(S)} \subseteq \mathscr{L'} $.
\end{proof}
Similarly the relation $ \mathscr{R'} $ can be defined and can obtain the following propositions.

 \begin{proposition}{\label{r}}
	Let $ S $ be an ordered regular semigroup and $\mathcal{B}(S)$ be the semigroup of bi-ideals of $ S $. Suppose that $ A,B \in \mathcal{B}(S) $. Define a relation $\mathscr{R'}$  on $\mathcal{B}(S)$ as $ A \, \mathscr{R'} \, B $ if and only if for each $ a \in{A} $ there exists $ b \in{B} $ such that $ a\, \mathscr{R}_{S} \, b  $ and vice-versa, where $ \mathscr{R}_{S} $ is the Green's $ \mathscr{R} $ relation on $ S $. Then $\mathscr{R'} $ is an equivalence relation on $\mathcal{B}(S)$.
\end{proposition}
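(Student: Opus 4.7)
The plan is to mirror the argument used for $\mathscr{L}'$ in Proposition \ref{a} almost verbatim, relying only on the fact that $\mathscr{R}_{S}$ is itself an equivalence relation on $S$. Reflexivity and symmetry are immediate from the definition: for any $A\in\mathcal{B}(S)$ and any $a\in A$ one has $a\,\mathscr{R}_{S}\,a$, so $A\,\mathscr{R}'\,A$; and the condition defining $A\,\mathscr{R}'\,B$ is already phrased symmetrically in $A$ and $B$ (the ``and vice-versa'' clause), giving symmetry at once.

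The only real content lies in transitivity. Suppose $A\,\mathscr{R}'\,B$ and $B\,\mathscr{R}'\,C$. Given $a\in A$, the first relation produces some $b\in B$ with $a\,\mathscr{R}_{S}\,b$, and then the second relation produces some $c\in C$ with $b\,\mathscr{R}_{S}\,c$. Transitivity of the underlying Green's relation $\mathscr{R}_{S}$ on $S$ then yields $a\,\mathscr{R}_{S}\,c$, so for every $a\in A$ there is a matching $c\in C$. Running the same argument starting from an arbitrary $c'\in C$, through some $b'\in B$ and then to some $a'\in A$, gives the reverse direction, and hence $A\,\mathscr{R}'\,C$.

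I expect no genuine obstacle: the proof is a direct ``equivalence-pushing'' argument and does not require regularity of $S$ or the characterisation of $\mathscr{R}_{S}$ via principal right ideals. The only thing to be careful about is to handle the ``vice-versa'' clause explicitly in the transitivity step, rather than silently assuming the relation is defined symmetrically; this is exactly the point where the proof of Proposition \ref{a} had to do a small amount of bookkeeping, and the same bookkeeping suffices here.
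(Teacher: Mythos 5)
Your proposal is correct and matches the paper's intent exactly: the paper gives no separate proof for $\mathscr{R}'$, stating only that it follows ``similarly'' to Proposition \ref{a}, and your argument is precisely that proof with $\mathscr{L}_{S}$ replaced by $\mathscr{R}_{S}$, including the correct handling of the ``vice-versa'' clause in the transitivity step. Your observation that regularity of $S$ is not needed is also accurate and consistent with the paper's later remark.
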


\begin{proposition}
	Let $ S $ be an ordered regular semigroup and $\mathcal{B}(S)$ be the semigroup of bi-ideals of $ S $. Suppose that $\mathscr{R}_{\mathcal{B}(S)} $ is the Green's $\mathscr{R} $- relation on $\mathcal{B}(S)$ and $\mathscr{R'} $ is the equivalence relation  defined by Proposition \ref{r}. Then $ \mathscr{R}_{\mathcal{B}(S)} \subseteq \mathscr{R'} $.
\end{proposition}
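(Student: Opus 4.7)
The plan is to mirror the proof of Proposition \ref{2}, simply replacing left translations by right translations throughout. Let $A, B \in \mathcal{B}(S)$ with $(A,B) \in \mathscr{R}_{\mathcal{B}(S)}$. By the analogue for $\mathscr{R}$ of the equivalent condition of the Green's relation used in Proposition \ref{2}, we obtain $X, Y \in \mathcal{B}(S)$ such that $A = B * X$ and $B = A * Y$, i.e.\ $A = (BX]$ and $B = (AY]$.

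Next I would argue by contradiction: suppose $(A,B) \notin \mathscr{R'}$. By the defining condition of $\mathscr{R'}$ (Proposition \ref{r}), at least one of the two symmetric conditions fails. Consider the first case: there exists some $a \in A$ with $(a,b) \notin \mathscr{R}_{S}$ for every $b \in B$. Invoking the right-hand characterisation in Proposition \ref{1}, this means that for each such $b$ there is no $u \in S$ with $a \leq bu$ and no $v \in S$ with $b \leq av$. In particular, the nonexistence of such a $u$ contradicts the equality $A = (BX]$: since $a \in A \subseteq (BX]$, there must be $b_{0} \in B$ and $x_{0} \in X \subseteq S$ with $a \leq b_{0}x_{0}$, and taking $u = x_{0}$ gives the desired inequality. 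The other case, in which some $b \in B$ fails to be $\mathscr{R}_{S}$-related to any element of $A$, is ruled out in exactly the same way using $B = (AY]$ instead.

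I do not expect any genuine obstacle here, as the argument is structurally identical to that of Proposition \ref{2}. The only point worth double-checking is that Proposition \ref{1} is applied on the correct side, so that the inequalities $a \leq bu$ and $b \leq av$ coming from the $\mathscr{R}_{S}$-characterisation are the ones we contradict, rather than $a \leq ub$ and $b \leq va$ as in the $\mathscr{L}$-case. Once this is observed, the contradiction yields $(A,B) \in \mathscr{R'}$ and hence $\mathscr{R}_{\mathcal{B}(S)} \subseteq \mathscr{R'}$.
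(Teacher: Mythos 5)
Your argument is correct and is essentially the paper's own proof: the paper proves the $\mathscr{L}$-version in Proposition \ref{2} and obtains this $\mathscr{R}$-version ``similarly'', which is exactly the dualization you carry out --- passing from $A\,\mathscr{R}_{\mathcal{B}(S)}\,B$ to $A=B*X$, $B=A*Y$ and then contradicting the failure of $\mathscr{R'}$ via the $\mathscr{R}_{S}$-characterisation of Proposition \ref{1}. Your point about applying Proposition \ref{1} on the correct side (the inequalities $a\le bu$, $b\le av$) is well taken, since the printed $\mathscr{R}$-statement there repeats the left-sided inequalities (evidently a typo); otherwise your write-up mirrors the paper's reasoning step for step, including its somewhat informal handling of the negation of $\mathscr{R'}$.
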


\begin{remark}
	The Prosition \ref{2} holds even if we drop the ordered regularity. For, let $ S $ be an ordered  semigroup and $\mathcal{B}(S)$ be the semigroup of bi-ideals of $ S $. Suppose that $\mathscr{L}_{\mathcal{B}(S)} $ is the Green's $\mathscr{L} $- relation on $\mathcal{B}(S)$ and $\mathscr{L'} $ is the equivalence relation  defined on $\mathcal{B}(S)$ as $ A \, \mathscr{L'} \, B $ if and only if for each $ a \in{A} $ there exists $ b \in{B} $ such that $ a\, \mathscr{L}_{S} \, b  $ and vice-versa where $ \mathscr{L}_{S} $ is the Green's $ \mathscr{L} $ relation on $ S $ and $ A,B \in \mathcal{B}(S) $. Then $ \mathscr{L}_{\mathcal{B}(S)} \subseteq \mathscr{L'} $.\\
	However the reverse inclusion $ \mathscr{L'}  \subseteq \mathscr{L}_{\mathcal{B}(S)} $ will not in general holds. For, consider the following example.
\end{remark}

\begin{example}
	The set $ S = \{a,b,c,d,e\} $ together with multiplication table:
	
	 \begin{center}
		\begin{tabular}{c|c|c|c|r|r}
			$.$& $ a $ & $ b $ & $ c $ & $ d $  & $ e $\\
			\hline
		$ a  $& $ a $ & $ b $ & $ c $ & $ d $  & $ e $\\
			\hline
				$ b $& $ a $ & $ b $ & $ c $ & $ d $  & $ e $\\
			\hline
				$ c  $& $ c $ & $ c $ & $ c $ & $ c $  & $ c $\\
			\hline
		$ d  $& $ d $ & $ d $ & $ d $ & $ d $  & $ d $\\	
			\hline
			$ e  $& $ c $ & $ c $ & $ c $ & $ c $  & $ c $\\	
		\end{tabular} 
	\end{center}
and patial order:\\
$ \leq := \{(a,a), (b,b), (c,a), (c,b), (c,c), (c,d), (c,e), (d,d), (e,d),(e,b),(e,e)\} $
 is an ordered semigroup which is not  regular.\\
 
  The egg box diagram  of $ S $ is as follows:\\

\begin{center}
	\begin{tabular}{|c|c|c|r|}
		\hline
		$ a $&  $ b$\\
		\hline
	\end{tabular} $\mathscr D_1$
\end{center}
\qquad\qquad\qquad\qquad\qquad\qquad\qquad\qquad	
	\begin{tabular}{|c|c|c|r|r|}
		\hline
		$c$\\
		\hline
			$d$\\
				\hline
			$e$\\
			\hline
	\end{tabular} $\mathscr D_2$

The bi-ideals of $ S $ are $ B_{1} = \{c\} $, $ B_{2} = \{c,e\} $, $ B_{3} = \{c,d,e\} $, $ B_{4} = \{a,c,d,e\}$, $ B_{5} = \{b,c,d,e\}$, $ B_{6} = S $.
The Cayley table of the semigroup of bi-ideals $\mathcal{B}(S)$   = $\{B_{1},B_{2}, B_{3}, B_{4}, B_{5}, B_{6}\}$ is given below:
\begin{center}
	\begin{tabular}{c|c|c|c|c|r|r}
		$*$& $ B_{1} $ & $ B_{2}$ & $ B_{3}$ & $ B_{4}$ & $ B_{5}$ & $ B_{6}$\\
		\hline
		$B_{1}$& $ B_{1} $ & $B_{1}$& $ B_{1} $ & $B_{1}$& $ B_{1} $& $ B_{1}$\\
		\hline
		$B_{2}$& $ B_{1} $ & $B_{1}$& $ B_{1} $ & $B_{1}$& $ B_{1} $& $ B_{1}$\\
		\hline
		$B_{3}$& $B_{3}$ & $B_{3}$ & $ B_{3}$ & $ B_{3}$ & $B_{3}$ & $B_{3}$\\
		\hline
		$B_{4}$& $B_{3}$& $B_{3}$ & $B_{3}$ & $ B_{4}$ & $B_{5}$& $B_{6}$ \\	
		\hline
		$B_{5}$& $B_{3}$& $B_{3}$ & $B_{3}$ & $ B_{4}$ & $B_{5}$& $B_{6}$ \\
		\hline
		$B_{6}$& $B_{3}$& $B_{3}$ & $B_{3}$ & $ B_{4}$ & $B_{5}$& $B_{6}$ \\	
	\end{tabular} 
\end{center}
The egg box diagram  of $\mathcal{B}(S)$ is given as follows:
\begin{center}

	\begin{tabular}{|c|r|}
		\hline
		$ B_{2}$\\
		\hline
	\end{tabular} $\mathscr D_1$\\

\qquad\qquad\qquad\qquad
	\begin{tabular}{|c|c|c|r|}
		\hline
		$ B_{4} $&  $ B_{5} $ & $ B_{6} $\\
		\hline
	\end{tabular} $\mathscr D_2$\\

\qquad\qquad\qquad\qquad\qquad\qquad\qquad\qquad
		\begin{tabular}{|c|c|c|r|r|}
		\hline
		$ B_{1} $\\
		\hline
		$ B_{3} $\\
		\hline
			\end{tabular} $\mathscr D_3$
\end{center}
Clearly $ (B_{1}, B_{2}) \in \mathscr{L'} $ but $ (B_{1}, B_{2}) \notin \mathscr{L}_{\mathcal{B}(S)} $. Hence $ \mathscr{L'}  \nsubseteq \mathscr{L}_{\mathcal{B}(S)} $
\end{example}

\section{Green's relations on semigroup of bi-ideals of ordered full transformation semigroup}
In the following we describe the semigroup of bi-ideals of ordered full transformation semigroup and define the Green's relations. For, consider full transformation semigroup $ \mathcal{T}_{X} $ on a finite set $ X $  with natural partial ordering  given by
\begin{center}
	$ f \leq g \iff R(f) \subseteq R(g)$  and for some $ \alpha \in \mathcal{T}_{X,} \,\, f = \alpha f = \alpha g $
\end{center}
where $R(f)$ and $ R(g) $ are right ideal generated by $ f,\,\, g \in \mathcal{T}_{X} $ respectively. Then ($ \mathcal{T}_{X} $, $\leq$) is an regular ordered semigroup (see cf.\cite{kss}).
\begin{lemma}
Let	$ \mathcal{T}_{X} $ be the full tranformation semigroup  and 
	$(\mathcal{T}_{X} , \leq)$ be an ordered full transformation semigroup on a set $X$. Suppose that $ \mathscr{L} $ is the Green's relation on semigroups. For $ f, g \in  \mathcal{T}_{X}  $, then $ f \, \mathscr{L} \, g $ in ($ \mathcal{T}_{X} $, $\leq$) if and only if $ f \, \mathscr{L} \, g$ in $ \mathcal{T}_{X} $.
\end{lemma}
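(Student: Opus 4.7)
The plan is to invoke Proposition \ref{1} to translate ordered Green's $\mathscr{L}$-equivalence in $(\mathcal{T}_X, \leq)$ into a pair of inequalities $f \leq xg$ and $g \leq yf$, and then use the concrete form of the natural partial order to upgrade these inequalities to honest equalities $f = ug$ and $g = vf$, which is precisely classical $\mathscr{L}$-equivalence in $\mathcal{T}_X$.

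First I would handle the direction from ordered to unordered. Suppose $f \, \mathscr{L} \, g$ in $(\mathcal{T}_X, \leq)$. Since $(\mathcal{T}_X, \leq)$ is a regular ordered semigroup, Proposition \ref{1} yields $x, y \in \mathcal{T}_X$ with $f \leq xg$ and $g \leq yf$. By the definition of the natural partial order, $f \leq xg$ supplies some $\alpha \in \mathcal{T}_X$ with $f = \alpha f = \alpha (xg) = (\alpha x) g$, so $f = ug$ for $u = \alpha x$. Symmetrically, $g \leq yf$ produces $v \in \mathcal{T}_X$ with $g = vf$. Since $\mathcal{T}_X$ contains the identity transformation, this is exactly $\mathcal{T}_X^{1} f = \mathcal{T}_X^{1} g$, i.e.\ $f \, \mathscr{L} \, g$ in the unordered semigroup $\mathcal{T}_X$.

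For the converse, suppose $f \, \mathscr{L} \, g$ holds in the classical sense in $\mathcal{T}_X$. Then there exist $x, y \in \mathcal{T}_X$ with $f = xg$ and $g = yf$. Since equality trivially implies $\leq$, we get $f \leq xg$ and $g \leq yf$, so another appeal to Proposition \ref{1} (permissible because $(\mathcal{T}_X, \leq)$ is regular) gives $f \, \mathscr{L} \, g$ in $(\mathcal{T}_X, \leq)$.

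The only non-trivial step is the passage from $f \leq xg$ to an equality of the shape $f = ug$ with $u \in \mathcal{T}_X$; this is where the specific second clause $f = \alpha f = \alpha g$ of the natural partial order is indispensable, since a generic order-theoretic inequality would not be strong enough to recover a multiplicative factorisation. Everything else is bookkeeping built on Proposition \ref{1}.
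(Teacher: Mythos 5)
Your proposal is correct and takes essentially the same route as the paper: both directions rest on Proposition \ref{1} combined with the second clause of the natural partial order ($f=\alpha f=\alpha g$), and the converse is identical. The only difference is cosmetic: in the forward direction you read off the factorisations $f=(\alpha x)g$ and $g=(\beta y)f$ and conclude $\mathscr{L}$ in $\mathcal{T}_X$ directly, whereas the paper first deduces $\mathrm{Im}\, f=\mathrm{Im}\, g$ and then invokes the image characterisation of $\mathscr{L}$ on $\mathcal{T}_X$.
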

\begin{proof}
	Suppose that $ f \, \mathscr{L} \, g $ in ($ \mathcal{T}_{X} , \leq$). Then by Proposition \ref{1}, there exists $ x, y \in \mathcal{T}_{X} $ such that $ f \leq xg $ and $ g \leq yf $. By definition of natural partial order, $f \leq xg $ implies there exists $ \alpha \in \mathcal{T}_{X} $ such that $ f = \alpha xg $. Then  $Im f  \subseteq  Im g$. Similarly $ g \leq yf \implies  Im g  \subseteq  Im f$. Hence $ f \, \mathscr{L} \, g $ in ($ \mathcal{T}_{X} , \leq)$  implies  $Im f = Im g$ 
	and so $f \mathscr{L} \, g$ in $ \mathcal{T}_{X} $.\\
	Conversly suppose that $ f \mathscr{L} \, g$ in $ \mathcal{T}_{X} $, By equivalent condition of Green's $ \mathscr{L} $ relation $ \exists\,\, x, y \in \mathcal{T}_{X} $ such that $ f = xg $ and $ g = yf $. By the reflexivity of $ \leq $ we get $ f \leq xg $ and $ g \leq yf $. Hence $ f \, \mathscr{L} \, g $ in ($ \mathcal{T}_{X} , \leq$)
	\end{proof}
Similary we obtain the following lemma:
\begin{lemma}
	Let $ \mathcal{T}_{X} $ be the full tranformation semigroup on $X$ and $ (\mathcal{T}_{X} $, $\leq$) be the ordered full transformation semigroup. $ \mathscr{R} $ is the Green's $ \mathscr{R} $ relation on semigroups. For $ f, g \in  \mathcal{T}_{X} $, 
	$$ f \, \mathscr{R} \, g \in (\mathcal{T}_{X}, \leq) \iff f \, \mathscr{R} \, g \in 
	\mathcal{T}_{X}. $$
\end{lemma}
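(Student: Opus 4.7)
The plan is to mirror, line by line, the proof of the preceding $\mathscr{L}$-lemma, trading images for kernels throughout. First I would invoke the $\mathscr{R}$-analogue of Proposition~\ref{1} on the regular ordered semigroup $(\mathcal{T}_{X},\leq)$: from $f\,\mathscr{R}\,g$ in $(\mathcal{T}_{X},\leq)$ extract $u,v\in\mathcal{T}_{X}$ with $f\leq gu$ and $g\leq fv$.

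Next I would unpack the natural partial order. The relation $f\leq gu$ entails $R(f)\subseteq R(gu)$; reading $R(\cdot)$ here as the principal right ideal $\{\cdot\}\cup(\cdot)\mathcal{T}_{X}$ in $\mathcal{T}_{X}$, this forces $f=gu$ or $f=(gu)h$ for some $h\in\mathcal{T}_{X}$. Either way $f=gw$ in $\mathcal{T}_{X}$ for some $w$, and pointwise this reads $f(x)=w(g(x))$, so $g(a)=g(b)$ implies $f(a)=f(b)$; hence $\ker g\subseteq\ker f$. The symmetric step applied to $g\leq fv$ gives $\ker f\subseteq\ker g$, and therefore $\ker f=\ker g$, which is the standard description of $\mathscr{R}$ on $\mathcal{T}_{X}$.

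For the converse I would use the unordered Green's characterisation: if $f\,\mathscr{R}\,g$ in $\mathcal{T}_{X}$, pick $u,v\in\mathcal{T}_{X}$ with $f=gu$ and $g=fv$; reflexivity of $\leq$ immediately upgrades these equalities to $f\leq gu$ and $g\leq fv$, and Proposition~\ref{1} then yields $f\,\mathscr{R}\,g$ in $(\mathcal{T}_{X},\leq)$.

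The main subtlety is choosing which clause of the natural partial order to unpack. In the $\mathscr{L}$-lemma the author used the ``$\alpha$-clause'' $f=\alpha xg$ to derive image inclusion; the analogous clause $f=\alpha gu$ does not immediately yield $\ker g\subseteq\ker f$, because $\alpha$ sits on the \emph{left} of $g$ and only image information travels in that direction. The right-ideal clause $R(f)\subseteq R(gu)$ is the correct tool, as it delivers a factorisation $f=gw$ with $g$ on the left of the extra factor, and it is exactly such a factorisation that transports kernel information from $g$ to $f$.
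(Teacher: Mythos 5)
Your proposal is correct, and it is essentially the argument the paper intends: the paper omits this proof entirely (``Similarly we obtain the following lemma''), and what it has in mind is precisely the mirror of the $\mathscr{L}$-lemma via Proposition \ref{1}, which is what you carry out, with the converse being the same reflexivity trick. Your one genuine refinement is the observation that a literal mirror of the $\mathscr{L}$-proof would stall: the clause $f=\alpha gu$ of the natural partial order transports image data, not kernel data, so one must instead invoke the clause $R(f)\subseteq R(gu)$ to obtain a factorisation $f=gw$ and hence $\ker g\subseteq \ker f$; this correctly fills a gap that the paper's ``similarly'' glosses over, and it is exactly the right fix.
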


\begin{theorem}{\cite{mallick}}{\label{8}}
	Let $ S $ be a ordered regular semigroup and $  \mathcal{R}(S)$, $\mathcal{L}(S)$,  $\mathcal{B}(S)  $ be the collection of right, left, and bi-ideals of $ S $ respectively. Then $ \mathcal{R}(S)$ and $\mathcal{L}(S) $  are bands and $ \mathcal{B}(S) = \mathcal{R}(S)\mathcal{L}(S)$. 
\end{theorem}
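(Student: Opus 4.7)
The plan is to split the statement into three parts and handle them in a convenient order: (i) $\mathcal{R}(S)$ is a band, (ii) $\mathcal{L}(S)$ is a band, and (iii) $\mathcal{B}(S)=\mathcal{R}(S)\mathcal{L}(S)$. Throughout I will rely on three routine facts that hold in any ordered semigroup: order-compatibility of the product gives $(A](C]\subseteq(AC]$, the downward closure satisfies $((A]]=(A]$, and, in the presence of regularity, the splitting identity $r\leq rxr=(rx)r=r(xr)$ lets one insert an element of $RS$ on the left or an element of $SL$ on the right whenever needed.

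For the band claim, fix $R\in\mathcal{R}(S)$. Since $RS\subseteq R$ and $R$ is downward closed, $(RR]\subseteq(RS]\subseteq R$ is immediate. Conversely, for each $r\in R$ I would pick $x\in S$ with $r\leq rxr=(rx)r$ and observe that $rx\in RS\subseteq R$, so $(rx)r\in RR$ and hence $r\in(RR]$. Thus $R*R=R$. The verification that $\mathcal{L}(S)$ is a band is completely symmetric, this time factoring $r\leq r(xr)$ with $xr\in SL\subseteq L$.

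For the decomposition I would first settle the easy inclusion $\mathcal{R}(S)\mathcal{L}(S)\subseteq\mathcal{B}(S)$ by checking that $(RL]$ is a bi-ideal: $(RL]S(RL]\subseteq(RLSRL]\subseteq(RSL]\subseteq(RL]$ using $RS\subseteq R$ together with $SL\subseteq L$, and the downward closure is automatic. For the converse, given $B\in\mathcal{B}(S)$ I would take the natural candidates $R_{B}=(B\cup BS]$ and $L_{B}=(B\cup SB]$ (the right and left ideals generated by $B$) and aim for $B=(R_{B}L_{B}]=R_{B}*L_{B}$. The inclusion $B\subseteq(R_{B}L_{B}]$ uses regularity once more: for $b\in B$ the splitting $b\leq(bx)b$ places $bx\in BS\subseteq R_{B}$ and $b\in B\subseteq L_{B}$, so $b\in(R_{B}L_{B}]$.

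The main obstacle will be the reverse inclusion $(R_{B}L_{B}]\subseteq B$. Expanding $(B\cup BS)(B\cup SB)=BB\cup BSB\cup BSSB$ reduces it to showing each summand sits inside $B$. The terms $BSB$ and $BSSB\subseteq BSB$ are absorbed immediately by the bi-ideal condition $BSB\subseteq B$, but $BB\subseteq B$ is where regularity enters a second time: for $b_{1},b_{2}\in B$ choose $x$ with $b_{1}\leq b_{1}xb_{1}$, whence $b_{1}b_{2}\leq b_{1}(xb_{1})b_{2}\in BSB\subseteq B$. Putting these together via the compatibility rules for $(\cdot]$ yields $(R_{B}L_{B}]\subseteq(B]=B$ and finishes the proof.
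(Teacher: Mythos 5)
Your argument is correct, and in fact it supplies a proof where the paper gives none: Theorem \ref{8} is quoted from \cite{mallick} without proof, so there is no in-paper argument to compare against. Your three steps are the standard ones and each works: idempotency of a right ideal $R$ via $r\leq (rx)r$ with $rx\in RS\subseteq R$ together with $(RR]\subseteq (RS]\subseteq R$; the bi-ideal property of $(RL]$ via $(RL]S(RL]\subseteq (R(LSR)L]\subseteq (RSL]\subseteq (RL]$; and the decomposition $B=R_B*L_B$ with $R_B=(B\cup BS]$, $L_B=(B\cup SB]$, where regularity is used once for $B\subseteq (R_BL_B]$ and once more to get $BB\subseteq (BSB]\subseteq B$ (equivalently, the remark in the paper that in a regular ordered semigroup every bi-ideal is subidempotent). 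Two small items are left tacit and should be recorded to make the statement about bands literally true: $\mathcal{R}(S)$ and $\mathcal{L}(S)$ must be closed under $*$ (e.g.\ $(R_1R_2]S\subseteq (R_1R_2S]\subseteq (R_1R_2]$), since a band is a semigroup of idempotents and not merely a set of them, and one should check that $R_B$ and $L_B$ are indeed a right and a left ideal; both are one-line verifications in the same spirit as the rest of your proof.
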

\begin{theorem}{\cite{cao}}
	$ S $ is ordered regular semigroup if and only if for every right ideal $ R $ and left ideal $ L $ of $ S, (RL] = R \cap L$.
\end{theorem}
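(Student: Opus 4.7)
The plan is to prove the two inclusions in each direction separately, using the principal one-sided ideals $R(a)=(a\cup aS]$ and $L(a)=(a\cup Sa]$ as the key test objects for the converse.

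For the forward direction, assume $S$ is regular. The inclusion $(RL]\subseteq R\cap L$ is immediate from the ideal properties: any $rl\in RL$ lies in $RS\subseteq R$ and in $SL\subseteq L$, and since $R,L$ are both downward closed their intersection is too, giving $(RL]\subseteq (R\cap L]=R\cap L$. For the reverse inclusion $R\cap L\subseteq (RL]$, take $a\in R\cap L$ and pick $x\in S$ with $a\leq axa$ using regularity. Then $ax\in R$ (right ideal) and $a\in L$, so $axa=(ax)a\in RL$ and hence $a\in(RL]$. This step is where regularity is actually used.

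For the converse, assume $(RL]=R\cap L$ for every right ideal $R$ and left ideal $L$. Fix $a\in S$ and apply the hypothesis to the principal ideals $R(a)=(a\cup aS]$ and $L(a)=(a\cup Sa]$, noting that $a\in R(a)\cap L(a)$. This gives $a\in (R(a)L(a)]$, so there exist $r\in R(a)$ and $l\in L(a)$ with $a\leq rl$. I then split into four cases according as $r\leq a$ or $r\leq as_1$, and $l\leq a$ or $l\leq s_2a$, using compatibility of $\leq$ with multiplication to obtain $a\leq axa$ directly in three of the four cases (with $x=s_2$, $x=s_1$, or $x=s_1s_2$ respectively).

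The main obstacle is the remaining case $r\leq a$ and $l\leq a$, which only yields $a\leq a^2$ rather than a sandwich expression. The plan here is to iterate: from $a\leq a^2$ and compatibility, one gets $a\leq a^2\leq a\cdot a\cdot a = a\cdot a\cdot a$, i.e.\ $a\leq a\cdot a\cdot a$, so regularity holds with $x=a$. Once all four cases are handled, the element $a$ was arbitrary, so $S$ is regular and the proof is complete.
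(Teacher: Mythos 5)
Your proof is correct. Note that the paper itself offers no proof of this statement---it is quoted from Cao's article as a known result---so there is no internal argument to compare with; your self-contained proof is the standard one and every step checks out: the inclusion $(RL]\subseteq R\cap L$ uses only $RS\subseteq R$, $SL\subseteq L$ and the fact that $R$ and $L$ are downward closed, the inclusion $R\cap L\subseteq (RL]$ is exactly where $a\leq axa$ is used, and in the converse the only delicate case $r\leq a$, $l\leq a$ is settled correctly, since $a\leq a^{2}$ together with compatibility gives $a^{2}\leq a^{3}$, hence $a\leq a\cdot a\cdot a$. The one step you leave implicit is that $R(a)=(a\cup aS]$ and $L(a)=(a\cup Sa]$ are in fact a right and a left ideal of the ordered semigroup (needed before the hypothesis can be applied to them); this is routine, using $(A](B]\subseteq (AB]$ and the downward closure being idempotent, but a sentence acknowledging it would make the argument complete.
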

We have for any right ideal $ R $ and left ideal $ L $ of a regular ordered semigroup $ (RL] = R \cap L$. Also $ R * L = (RL] $ and any bi-ideal $  B = R * L $. Subject to these obervations we have the following theorem.
\begin{theorem} \label{3.4}
	Let $ S $ be a ordered regular  semigroup and $  \mathcal{R}(S)$, $\mathcal{L}(S)$,  $\mathcal{B}(S)  $ be the collection of right, left and bi-ideals of $ S $ respectively. Then any $ B \in \mathcal{B}(S) $ is of the form $ B =  R \cap L $, where $ R \in \mathcal{R}(S) $, $ L \in \mathcal{L}(S) $.
\end{theorem}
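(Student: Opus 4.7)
The plan is to chain together the two theorems stated immediately before the statement, together with the definition of the product in $\mathcal{B}(S)$. First, by the Mallick theorem (Theorem \ref{8}), we have $\mathcal{B}(S) = \mathcal{R}(S)\mathcal{L}(S)$, so for any bi-ideal $B \in \mathcal{B}(S)$ there exist a right ideal $R \in \mathcal{R}(S)$ and a left ideal $L \in \mathcal{L}(S)$ such that $B = R * L$ in the semigroup $\mathcal{B}(S)$.

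Next, I unfold the definition of the operation $*$ on $\mathcal{B}(S)$: by definition, $R * L = (RL]$. This reduces the problem to identifying $(RL]$ inside $S$.

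Finally, I would invoke Cao's theorem, which asserts that in an ordered regular semigroup $(RL] = R \cap L$ for any right ideal $R$ and any left ideal $L$. Combining these three steps gives $B = R * L = (RL] = R \cap L$, which is exactly the required representation.

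There is essentially no obstacle here; the entire content is packaged in Theorem \ref{8} and Cao's theorem, and the proof is a one-line composition of the two equalities once the $*$ operation is written out. The only thing that would require any care at all is making sure that the $R$ and $L$ produced by the Mallick decomposition are genuinely a right ideal and a left ideal (so that Cao's theorem applies) rather than arbitrary elements of some larger set, but this is built into the statement $\mathcal{B}(S) = \mathcal{R}(S)\mathcal{L}(S)$.
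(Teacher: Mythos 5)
Your proposal is correct and is essentially identical to the paper's own argument: the paper proves the theorem precisely by combining Theorem \ref{8} (so that $B = R*L$ with $R \in \mathcal{R}(S)$, $L \in \mathcal{L}(S)$), the definition $R*L=(RL]$, and Cao's theorem $(RL]=R\cap L$, stating these as observations immediately before the theorem. No difference in approach worth noting.
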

\begin{corollary}
	Let $ S $ be a ordered regular semigroup and $ L(a), R(a), B(a) $ be the principal left, right and bi-ideal generated by $ a \in{S} $  respectively. Then $ B(a) = R(a) \cap L(a)$.
\end{corollary}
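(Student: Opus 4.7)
The plan is to obtain this as an immediate consequence of Theorem \ref{3.4} together with the universal property of the principal right, left, and bi-ideals generated by $a$. The proof splits naturally into the two inclusions $B(a) \subseteq R(a) \cap L(a)$ and $R(a) \cap L(a) \subseteq B(a)$.

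For the inclusion $B(a) \subseteq R(a) \cap L(a)$, I would first verify that $R(a) \cap L(a)$ is itself a bi-ideal of $S$ containing $a$. Containment of $a$ is immediate from $a \in R(a)$ and $a \in L(a)$. To check the bi-ideal condition, observe that
\[
(R(a) \cap L(a))\, S\, (R(a) \cap L(a)) \subseteq R(a) S \cap S L(a) \subseteq R(a) \cap L(a),
\]
using that $R(a)$ is a right ideal and $L(a)$ is a left ideal. Downward closure of the intersection follows from the downward closure of $R(a)$ and $L(a)$ separately, since $x \leq y \in R(a) \cap L(a)$ forces $x \in R(a)$ and $x \in L(a)$. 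Thus $R(a) \cap L(a)$ is a bi-ideal containing $a$, and since $B(a)$ is by definition the smallest such bi-ideal, we conclude $B(a) \subseteq R(a) \cap L(a)$.

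For the reverse inclusion, I would invoke Theorem \ref{3.4} to write $B(a) = R \cap L$ for some $R \in \mathcal{R}(S)$ and $L \in \mathcal{L}(S)$. Since $a \in B(a) = R \cap L$, the element $a$ lies in $R$ and in $L$. By the minimality of the principal right and left ideals generated by $a$, this forces $R(a) \subseteq R$ and $L(a) \subseteq L$, hence
\[
R(a) \cap L(a) \subseteq R \cap L = B(a).
\]
Combining both inclusions gives $B(a) = R(a) \cap L(a)$.

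The argument is essentially a routine application of Theorem \ref{3.4}, and there is no substantial obstacle; the only point requiring a small verification is that $R(a) \cap L(a)$ really is a bi-ideal, which is handled by the display above together with the elementary remark about downward closure.
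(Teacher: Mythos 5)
Your proof is correct, and the reverse inclusion is exactly the paper's argument: apply Theorem \ref{3.4} to write $B(a)=R\cap L$, use $a\in B(a)$ to get $R(a)\subseteq R$ and $L(a)\subseteq L$, and intersect. Where you diverge is the forward inclusion. The paper argues element-wise: for $t\in B(a)$ it takes $t\leq axa$ and reads off $t\leq a(xa)$ and $t\leq (ax)a$, so $t\in R(a)\cap L(a)$; note this tacitly uses regularity, via $B(a)=(aSa]$, to absorb the ``$a$'' part of $(a\cup aSa]$. You instead verify that $R(a)\cap L(a)$ is a bi-ideal containing $a$ and invoke minimality of $B(a)$, which has the small advantage of not needing regularity for that direction at all. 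The one point you should spell out is the phrase ``by definition the smallest such bi-ideal'': the paper defines $B(a)=(a\cup aSa]$, not as the least bi-ideal containing $a$, so you need the one-line observation that for any bi-ideal $B$ with $a\in B$ one has $aSa\subseteq BSB\subseteq B$ and $(B]=B$, whence $(a\cup aSa]\subseteq B$. With that remark added, your argument is complete and, for the forward inclusion, slightly more general than the paper's.
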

\begin{proof}
	Let $ t \in B(a) \implies t \in (aSa] \implies t \leq axa, $ for some $ x \in S$. Then $ t \leq a(xa)  $ and $ t \leq (ax)a  \implies t \in R(a) $ and $t \in L(a) \implies t \in R(a) \cap L(a) \implies B(a) \subseteq R(a) \cap L(a) $.\\
	Now suppose that $ B(a) = R \cap L $, for some $ R \in \mathcal{R}(S) $, $ L \in \mathcal{L}(S) $. Then $ R(a) \subseteq R $ and $ L(a) \subseteq L \implies R(a) \cap L(a) \subseteq R \cap L \implies R(a) \cap L(a) \subseteq B(a).  $ Hence $ B(a) = R(a) \cap L(a). $
\end{proof}
In the light of Theorem \ref{3.4}  it can be seen that  any bi-ideal $ B $ of 
($ \mathcal{T}_{X} , \leq$) is of the form $ B = R * L = (RL] = R \cap L $, where $ R \in \mathcal{R}(\mathcal{T}_{X}) $, $ L \in \mathcal{L}(\mathcal{T}_{X}) $.

The following examples describes the semigroup of bi-ideals of ordered regular  semigroups 
$ (\mathcal{T}_{2},\leq))$ and $ (\mathcal{T}_{3},\leq))$ and it is shown that the relations $\mathscr{L'} $ and $\mathscr{R'} $ defined in Propositions 2.3 and 2.5 coinsides with Green's relations in the semigroups of bi-ideals of $(\mathcal{T}_{2},\leq)$ and $(\mathcal{T}_{3},\leq)$.

\begin{example} ( Bi-ideals of $ (\mathcal{T}_{2},\leq)).\,\,$ Let $ X = \{1, 2\} $,  denote $ (i, j) $ for the mapping $ 1 \mapsto i, 2 \mapsto j $. Then $ \mathcal{T}_{2} = \{(1,1),(2,2), (1,2), (2,1)\} $ and it's Cayley table and natural partial order $\leq$ are  given below:\\

 \begin{center}
 	\begin{tabular}{c|c|c|r|r}
 			$.$& $(1,1)$ & $(2,2)$ & $(1,2)$ & $(2,1)$\\
 					\hline
 		$(1,1)$& $(1,1)$ & $(2,2)$ & $(1,1)$ & $(2,2)$\\
\hline
$(2,2)$& $(1,1)$ & $(2,2)$ & $(2,2)$ & $(1,1)$\\ 
\hline
$(1,2)$& $(1,1)$ & $(2,2)$ & $(1,2)$ & $(2,1)$\\
\hline
$(2,1)$& $(1,1)$ & $(2,2)$ & $(2,1)$ & $(1,2)$\\		
 	\end{tabular} 
 \end{center}

 $ \leq =  \{(1,1) \leq (1,1), (2,2) \leq (2,2), (1,2) \leq (1,2), (2,1) \leq (2,1),\\
  (1,1) \leq (1,2), (1,1) \leq (2,1), (2,2) \leq (1,2), (2,2) \leq (2,1)\} $.\\
 Egg-box diagram of ($ \mathcal{T}_{2} , \leq$) is as follows:\\

 \begin{center}
 	 \begin{tabular}{|c|c|c|r|}
 		\hline
 		$(1,1)$&  $(2,2)$\\
 		\hline
 	\end{tabular} $\mathscr D_1$
\end{center}
\qquad\qquad\qquad\qquad\qquad\qquad\qquad\qquad\qquad
	\begin{tabular}{|c|c|c|r|r|}
		\hline
	 $(1,2)$\\$ (2,1) $\\
	 		\hline
	\end{tabular} $\mathscr D_2$\\

Bi-ideals of ($ \mathcal{T}_{2} , \leq$) are $ B_{1} = \{(1, 1)\} $, $ B_{2} = \{(2, 2)\} $, $ B_{3} = \{(1, 1), (2, 2)\} $, $ B_{4} = \{(1, 1), (2, 2), (1,2), (2,1)\} $. The cayley table of $\mathcal{B}(\mathcal{T}_{2}) = \{B_{1}, B_{2}, B_{3}, B_{4}\}  $, the semigroup of bi-ideals of ($ \mathcal{T}_{2} , \leq$) is given below:\\

  \begin{center}
 	\begin{tabular}{c|c|c|r|r}
 		$*$& $ B_{1} $ & $ B_{2}$ & $ B_{3}$ & $ B_{4}$\\
 		\hline
 		$B_{1}$& $ B_{1} $ & $ B_{2}$ & $ B_{3}$ & $ B_{3}$\\
 		\hline
 		$B_{2}$& $ B_{1} $ & $ B_{2}$ & $ B_{3}$ & $ B_{3}$\\
 		\hline
 		$B_{3}$& $ B_{1} $ & $ B_{2}$ & $ B_{3}$ & $ B_{3}$\\
 		\hline
 		$B_{4}$& $ B_{1} $ & $ B_{2}$ & $ B_{3}$ & $ B_{4}$\\		
 	\end{tabular} 
 \end{center}

 Egg-box diagram of ($\mathcal{B}(\mathcal{T}_{2}), *$) is given as follows:\\
 \begin{center}
	\begin{tabular}{|c|c|c|r|}
		\hline
		$ B_{1} $ & $ B_{2}$ & $ B_{3}$\\
		
		\hline
	\end{tabular} $\mathscr D_1$
\end{center}
\qquad\qquad\qquad\qquad\qquad\qquad\qquad\qquad\qquad
	\begin{tabular}{|c|c|c|r|r|}
		\hline
		$ B_{4} $\\
		\hline
	\end{tabular} $\mathscr D_2$\\

The equivalence relations $\mathscr{L'} $ and $\mathscr{R'} $, defined in the Propositions \ref{a} and \ref{r} respectively are :\\
$\mathscr{L'}$ = $\{(B_{1}, B_{1}), (B_{2}, B_{2}), (B_{3}, B_{3}), (B_{4}, B_{4}) \}$\\
$\mathscr{R'} $ = $ \{(B_{1}, B_{1}), (B_{2}, B_{2}), (B_{3}, B_{3}), (B_{4}, B_{4}), (B_{1}, B_{2}), (B_{1}, B_{3}), (B_{2}, B_{3}),$ $ (B_{2}, B_{1}), (B_{3}, B_{1}), (B_{3}, B_{2})\} $.
\end{example}

Hence it is seen that the Green's relations  $\mathscr{L}_{\mathcal{B}(\mathcal{T}_{2})} $,  $\mathscr{R}_{\mathcal{B}(\mathcal{T}_{2})} $
on $ \mathcal{B}(\mathcal{T}_{2},\leq) $  coincides with $\mathscr{L'} $ and $\mathscr{R'} $ and the Green's relations  $\mathscr{L}_{\mathcal{B}(\mathcal{T}_{2})} $,  $\mathscr{R}_{\mathcal{B}(\mathcal{T}_{2})} $
on $ \mathcal{B}(\mathcal{T}_{2}) $ can be defined in terms of Green's relations  $\mathscr{L}_{\mathcal{T}_{2}} $,  $\mathscr{R}_{\mathcal{T}_{2}} $
on $ (\mathcal{T}_{2},\leq) $ .

From the Cayley table of $ \mathcal{B}(\mathcal{T}_{2}) $ it is evident that $ \mathcal{B}(\mathcal{T}_{2}) $ is a band. So by Theorem \ref{b}, ($ \mathcal{T}_{2} , \leq$) is both regular and intra- regular.\\

\begin{example} (bi-ideals of $(\mathcal{T}_{3},\;\leq)\,\,$

Let $ X = \{1, 2, 3\} $. Similar to in the case of $\mathcal{T}_{2}$, we shall denote $ (i, j, k) $ for the mapping $ 1 \mapsto i, 2 \mapsto j, 3 \mapsto k $ and order as the natural partial order. Then the egg box diagram of $\mathcal{T}_{3}$ is as below:\\

	\begin{tabular}{|c|c|c|r|}
		\hline
		$(1,1,1)$&  $(2,2,2)$ &  $(3, 3,3)$\\
		\hline
	\end{tabular} $\mathscr D_1$

\qquad\qquad\qquad\qquad\qquad\qquad
	\begin{tabular}{|c|c|c|r|r|}
		\hline
	$(1,2,2)$&  $(1,3,3)$ &  $(2,3,3)$\\
	$(2,1,1)$&  $(3,1,1)$ &  $(3,2,2)$\\
		\hline
			$(2,1,2)$&  $(3,1,3)$ &  $(3,2,3)$\\
			$(1,2,1)$&  $(1,3,1)$ &  $(2,3,2)$\\
		\hline
			$(2,2,1)$&  $(3,3,1)$ &  $(3,3,2)$\\
			$(1,1,2)$&  $(1,1,3)$ &  $(2,2,3)$\\
		\hline
	\end{tabular} $\mathscr D_2$\\

\qquad\qquad\qquad\qquad\qquad\qquad\qquad\qquad\qquad\qquad\qquad
\begin{tabular}{|c|c|c|r|r|}
		\hline
		$(1,2,3) (2,3,1) $\\
		$(3,1,2) (1,3,2) $\\
		$(3,2,1) (2,1,3) $\\
		\hline
	\end{tabular} $\mathscr D_3$

The right ideals of $\mathcal{T}_{3}$ are $ \mathcal{R}(\mathcal{T}_{3}) $, = \{  $\mathscr D_1$ , $\mathcal{T}_{3}$ and union of $\mathscr D_1$ and rows of  $\mathscr D_2$ \}
The principal left ideals are  $ L_{1} = \{(1,1,1)\} $ , $ L_{2} = \{(2,2,2)\} $ , $ L_{3} = \{(3,3,3)\} $ , $ L_{4} = \{(1,1,1),(2,2,2)\} \cup $\{first column of $\mathscr D_2$ \}  ,  $ L_{5} = \{(1,1,1),(3,3,3)\} \cup $\{second column of $\mathscr D_2$ \}  $ L_{6} = \{(2,2,2), (3,3,3)\} \cup $\{third column of $\mathscr D_2$ \}  and $\mathcal{T}_{3}$.
So the left ideals of $\mathcal{T}_{3}$ are  $ \mathcal{L}(\mathcal{T}_{3}) $ = \{$\mathcal{T}_{3}, L_{i}'s$ and all possible unions of $ L_{i}'s$\}.\\
Hence the bi-ideals of $\mathcal{T}_{3}$ are \\
$ \mathcal{B}(\mathcal{T}_{3}) $ = \{ $ B = R * L = (RL] = R \cap L $ | $ R \in \mathcal{R}(\mathcal{T}_{3}) $, $ L \in \mathcal{L}(\mathcal{T}_{3}) \}$.\\
Consider the equivalence relation $\mathscr{L'} $ on $ \mathcal{B}(\mathcal{T}_{3})$. i.e. $ B_{i} \, \mathscr{L'} \, B_{j} $ if and only if for each $ b_{i} \in{B_{i}} $ there exists $ b_{j} \in{B_{j}} $ such that $ b_{i}\, \mathscr{L}_{\mathcal{T}_{3}} \, b_{j}  $ and for each $ b'_{j} \in{B_{j}} $ there exists $ b'_{i} \in{B_{i}} $ such that $ b'_{i}\, \mathscr{L}_{\mathcal{T}_{3}} \, b'_{j}$. We can show that $ \mathscr{L'}  \subseteq \mathscr{L}_{\mathcal{B}(\mathcal{T}_{3})} $. For let $( B_{i}, B_{j}) \in \mathscr{L'} $. Since each bi-ideal of $ \mathcal{T}_{3} $ is the disjoint union of $ \mathscr{H} $ - classes, choose $ b_{i1} $ form one of those $ \mathscr{H} $- class of $ B_{i} $ . $ b_{i1} \in{B_{i}} $ and $ B_{i} \, \mathscr{L'} \, B_{j} \implies \exists  b_{j1} \in{B_{j}}  $ such that $ b_{i1}\, \mathscr{L}_{\mathcal{T}_{3}} \, b_{j1}  $. By proposition \ref{1} $ \exists \,x,y \in \mathcal{T}_{3} $ such that $ b_{i1} \leq xb_{j1} $ and $ b_{j1} \leq yb_{i1} $. In particular we can find $ x_{1} $ from $ \mathscr{D} $ - class of $ b_{i1} $ such that $ b_{i1} \leq x_{1}b_{j1} $. similarly choose choose $ b_{i2} $ form another  $ \mathscr{H} $- class of $ B_{i} $ and find $ x_{2} $ from $ \mathscr{D} $ - class of $ b_{i2} $ such that $ b_{i2} \leq x_{2}b_{j2} $ and so on. Then we can easily verify that the union of those $ \mathscr{H} $- class of $ x_{ik} $ is a bi-ideal of $ \mathcal{T}_{3} $. i.e. $ X = \cup \mathscr{H}(x_{ik}) \in \mathcal{B}(\mathcal{T}_{3}) $. Also $ B_{i} = X * B_{j} $. Similarly we get $ B_{j} = Y * B_{i} $. Hence $( B_{i}, B_{j}) \in \mathscr{L}_{\mathcal{B}(\mathcal{T}_{3})} $. So $ \mathscr{L'}  \subseteq \mathscr{L}_{\mathcal{B}(\mathcal{T}_{3})} $. We already proved in the Proposition \ref{2} that  $ \mathscr{L}_{\mathcal{B}(\mathcal{T}_{3})}  \subseteq  \mathscr{L'} $. Hence 
$\mathscr{L}_{\mathcal{B}(\mathcal{T}_{3})}  =  \mathscr{L'} $, \, 
i.e the Green's relation  $\mathscr{L}_{\mathcal{B}(\mathcal{T}_{3})} $,  on $\mathcal{B}(\mathcal{T}_{3}) $ is coincides with $\mathscr{L'} $ . Similarly $\mathscr{R}_{\mathcal{B}(\mathcal{T}_{3})} $ is same as  $\mathscr{R'} $. Hence the Green's relations  $\mathscr{L}_{\mathcal{B}(\mathcal{T}_{3})} $,  $\mathscr{R}_{\mathcal{B}(\mathcal{T}_{3})} $ on $ \mathcal{B}(\mathcal{T}_{3}) $ can be defined in terms of Green's relations  $\mathscr{L}_{\mathcal{T}_{3}} $,  $\mathscr{R}_{\mathcal{T}_{3}} $ on $ \mathcal{T}_{3} $.
\end{example}
We can extend this result to semigroup of bi-ideals of ordered full transformation semigroup $ \mathcal{T}_{X}$  with $ |X| = n $. The Green's relations on $ \mathcal{B}(\mathcal{T}_{X})$ is defined as follows:
\begin{definition}\label{def1}
	Given $ B_{1}, B_{2} \in \mathcal{B}(\mathcal{T}_{X}) $, $ B_{1}\,  \mathscr{L}_{\mathcal{B}(\mathcal{T}_{X})} \, B_{2} $ if and only if for each $ b_{1} \in{B_{1}} $ there exists $ b_{2} \in{B_{2}} $ such that $ b_{1}\, \mathscr{L}_{\mathcal{T}_{X}} \, b_{2}  $ and for each $ b'_{1} \in{B_{1}} $ there exists $ b'_{2} \in{B_{2}} $ such that $ b'_{1}\, \mathscr{L}_{\mathcal{T}_{X}} \, b'_{2}  $ .
\end{definition}

\begin{definition}
	Given $ B_{1}, B_{2} \in \mathcal{B}(\mathcal{T}_{X}) $, $ B_{1}\,  \mathscr{R}_{\mathcal{B}(\mathcal{T}_{X})} \, B_{2} $ if and only if for each $ b_{1} \in{B_{1}} $ there exists $ b_{2} \in{B_{2}} $ such that $ b_{1}\, \mathscr{R}_{\mathcal{T}_{X}} \, b_{2}  $ and for each $ b'_{1} \in{B_{1}} $ there exists $ b'_{2} \in{B_{2}} $ such that $ b'_{1}\, \mathscr{R}_{\mathcal{T}_{X}} \, b'_{2}  $ .
\end{definition}
\begin{proposition}
	Let $ f, g \in  $ $ (\mathcal{T}_{X} $, $\leq$) and $ B(f), B(g) $ be the principal bi-ideal generated by $ f, g $ respectively. Then $ f\, \mathscr{L}_{\mathcal{T}_{X}} \, g  $ if and only if $ B(f)\,  \mathscr{L}_{\mathcal{B}(\mathcal{T}_{X})} \, B(g) $.  
\end{proposition}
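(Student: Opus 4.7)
The plan is to exploit the characterization $f\,\mathscr{L}_{\mathcal{T}_X}\,g \iff \operatorname{Im}(f)=\operatorname{Im}(g)$, which follows by combining the classical description of Green's $\mathscr{L}$ on $\mathcal{T}_X$ with the first lemma of this section. I also use the fact that the natural partial order forces any element lying below a product to equal a $\mathcal{T}_X$-multiple of that product, hence to have image contained in the image of the product.

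\emph{Forward direction.} Assume $f\,\mathscr{L}_{\mathcal{T}_X}\,g$. Given $t\in B(f)=(f\cup f\mathcal{T}_X f]$, pick $w\in\{f\}\cup f\mathcal{T}_X f$ with $t\leq w$; the definition of the natural partial order supplies $\alpha\in\mathcal{T}_X$ with $t=\alpha w$, so $\operatorname{Im}(t)\subseteq\operatorname{Im}(w)\subseteq\operatorname{Im}(f)=\operatorname{Im}(g)$. I then produce $s\in B(g)$ with $\operatorname{Im}(s)=\operatorname{Im}(t)$ as follows: for each $a\in\operatorname{Im}(t)$ pick a preimage $x_a\in g^{-1}(a)$ (possible since $\operatorname{Im}(t)\subseteq\operatorname{Im}(g)$), and let $v\in\mathcal{T}_X$ be any map sending $\operatorname{Im}(g)$ onto $\{x_a:a\in\operatorname{Im}(t)\}$. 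Setting $s:=gvg\in g\mathcal{T}_Xg\subseteq B(g)$, a direct computation gives $\operatorname{Im}(s)=g(v(\operatorname{Im}(g)))=\operatorname{Im}(t)$, whence $s\,\mathscr{L}_{\mathcal{T}_X}\,t$. By the symmetric construction, every element of $B(g)$ has an $\mathscr{L}_{\mathcal{T}_X}$-related partner in $B(f)$, and Definition \ref{def1} yields $B(f)\,\mathscr{L}_{\mathcal{B}(\mathcal{T}_X)}\,B(g)$.

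\emph{Reverse direction.} Assume $B(f)\,\mathscr{L}_{\mathcal{B}(\mathcal{T}_X)}\,B(g)$. Since $f\in B(f)$, Definition \ref{def1} produces $s\in B(g)$ with $f\,\mathscr{L}_{\mathcal{T}_X}\,s$, and by the image-containment observation above $\operatorname{Im}(f)=\operatorname{Im}(s)\subseteq\operatorname{Im}(g)$. Starting symmetrically from $g\in B(g)$ gives $\operatorname{Im}(g)\subseteq\operatorname{Im}(f)$, so $\operatorname{Im}(f)=\operatorname{Im}(g)$ and $f\,\mathscr{L}_{\mathcal{T}_X}\,g$.

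The main obstacle is the image-realization step in the forward direction: one has to exhibit, for each $t\in B(f)$, an explicit element $gvg\in B(g)$ attaining the image $\operatorname{Im}(t)$. This is the step where the concrete set-map structure of $\mathcal{T}_X$ is essential; in an abstract ordered regular semigroup the analogous statement can fail, as illustrated by the five-element example earlier in the paper.
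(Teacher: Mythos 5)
Your proof is correct, and the forward direction is essentially the paper's own argument: image containment via the natural partial order, followed by the explicit construction of $gvg\in B(g)$ realizing $\operatorname{Im}(t)$ (the paper's map $y$ plays the role of your $v$). Where you genuinely diverge is the reverse direction. The paper treats $B(f)\,\mathscr{L}_{\mathcal{B}(\mathcal{T}_X)}\,B(g)$ via the equivalent condition for Green's $\mathscr{L}$ in the semigroup $(\mathcal{B}(\mathcal{T}_X),*)$: it takes $X,Y\in\mathcal{B}(\mathcal{T}_X)$ with $B(f)=(XB(g)]$ and $B(g)=(YB(f)]$, unpacks $f\leq x(gqg)$ and $g\leq y(fq'f)$, and concludes by Proposition \ref{1}; this tacitly identifies the element-wise relation of Definition \ref{def1} with the actual Green's relation on $\mathcal{B}(\mathcal{T}_X)$, a coincidence the paper supports only through the $\mathcal{T}_2$ and $\mathcal{T}_3$ discussions. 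You instead work directly from Definition \ref{def1}: since $f\in B(f)$ it has an $\mathscr{L}_{\mathcal{T}_X}$-partner $s\in B(g)$, and because every element of $B(g)$ lies below an element of $g\cup g\mathcal{T}_Xg$, the natural partial order gives $\operatorname{Im}(f)=\operatorname{Im}(s)\subseteq\operatorname{Im}(g)$, with the symmetric containment obtained from $g\in B(g)$. Your route is more elementary, needs neither Proposition \ref{1} nor the $*$-characterization, and proves the proposition exactly as stated; the paper's route, at the cost of that extra identification, ties the statement to the genuine Green's structure of $(\mathcal{B}(\mathcal{T}_X),*)$.
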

\begin{proof}
	Suppose that $ f\, \mathscr{L}_{\mathcal{T}_{X}} \, g  $. Then $ Im f  = Im g $. Let $ t \in B(f)$. Then $ t \leq fhf $ for some $ h \in $ $ (\mathcal{T}_{X} $, $\leq$). By the definition of natural partial order $ \exists \alpha \in $ $ (\mathcal{T}_{X} $, $\leq$) such that $ t = \alpha fhf \implies Im t \subseteq Im f \implies Im t \subseteq Im g $. We have to find $ t' \in B(g) $ such that $ Im t = Im t' $. Let $ Im g = \{x_{1}, x_{2},...x_{l}\} \subseteq X $ and $ Im t = \{x_{i1}, x_{i2},...x_{ik}\} \subseteq Im g $. Define a map $ y : X \rightarrow X $ as follows: Each $ x_{ij} \mapsto y_{ij} $, where $ y_{ij} $ is an element of set of pre images of $ x_{ij},j = 1, 2,...k $ under the map $ g $ and $ X - \{x_{i1}, x_{i2},...x_{ik}\} \mapsto y_{i1} $. Now define $ t' = gyg $. Then $ t' \in B(g) $ and $ Im t = Im t' \implies t\, \mathscr{L}_{\mathcal{T}_{X}} \, t'  $. Similarly we can prove that for each $ q \in B(g),$ there exists $ q' \in B(f) $ such $ q\, \mathscr{L}_{\mathcal{T}_{X}} \, q' $. So by Definition \ref{def1}, $ B(f)\,  \mathscr{L}_{\mathcal{B}(\mathcal{T}_{X})} \, B(g) $.\\
	Conversely suppose that  $ B(f)\,  \mathscr{L}_{\mathcal{B}(\mathcal{T}_{X})} \, B(g) $. By equivalent condition of Green's relation in semigroup, $ \exists \,X,Y \in \mathcal{B}(\mathcal{T}_{X}) $ such that $ B(f) = X * B(g)  $ and $ B(g) = Y * B(f)  \implies B(f) = (X B(g)] $ and $ B(g) = (Y B(f)] $ . Then $ f \leq xt $ and $ g \leq yt' $, where $ x \in X, y \in Y, t \in B(g), t' \in B(f)$. $ t \in B(g) \implies t \leq gqg $ and $ t' \in B(f) \implies t' \leq fq'f, \, (q, q'\in$  $ (\mathcal{T}_{X} $, $\leq$)). So $ f \leq xgqg $ and $ g \leq yfq'f \implies f \leq (xgq)g $ and $ g \leq (yfq')f $. Hence by Proposition \ref{1},  $ f\, \mathscr{L}_{\mathcal{T}_{X}} \, g  $.
	 
\end{proof}
Consider the semigroup $ (\mathcal{B}(\mathcal{T}_{X}), *) $. Let $ B \in \mathcal{B}(\mathcal{T}_{X}) $. Since every right and left ideals are idempotents in $  \mathcal{B}(\mathcal{T}_{X})(Theorem \ref{8}) $,  if $ B $ is left or right ideal then $ \mathscr{L}_{\mathcal{B}(\mathcal{T}_{X})} $ - class and $ \mathscr{R}_{\mathcal{B}(\mathcal{T}_{X})} $ - class of $ B $ contains the idempotent $ B $. If $ B $ is neither a left ideal nor a right ideal then $ B $ is of the form $ B = R \cap L $,  where $ R \in \mathcal{R}(\mathcal{T}_{X}) $, $ L \in \mathcal{L}(\mathcal{T}_{X}) $. It is evident that $ B \,  \mathscr{L}_{\mathcal{B}(\mathcal{T}_{X})} \, L  $ and $ B\,  \mathscr{R}_{\mathcal{B}(\mathcal{T}_{X})} \, R $ . Hence $ \mathscr{L}_{\mathcal{B}(\mathcal{T}_{X})} $ - class of B  contains the idempotent $ L $
and $ \mathscr{R}_{\mathcal{B}(\mathcal{T}_{X})} $ - class of $ B $ contains the idempotent $ R $. So $ (\mathcal{B}(\mathcal{T}_{X}), *) $ is a regular semigroup.


\begin{thebibliography}{99}
		\bibitem{cao}Y. Cao, \textit{Characterization of Regular Ordered Semigroups by Quasi-ideals} Vietnam Journal of Mathematics,30:3 (2002) 239 - 250.
		\bibitem{clif}A.H. Clifford and G.B.Preston, \textit{The Algebraic theory of semigroups.} Vol I. Mathematical
		surveys, No.7, American Mathematical Society, Providence, R.I., 1961.
		\bibitem{good}R. A. Good and D. R. Hughes, \textit{Associated groups for a semigroup} Bull. Amer.	Math. Soc. 58 (1952) 624-625.
		\bibitem{mnideal} S. Lajos, \textit {On (m; n)-ideals of semigroups}, Abstract of Second Hunger. In Mathe-	matical Congress I,(1960) 1960, 42-44.
		\bibitem{lajos} S. Lajos, \textit {On the bi-ideals in semigroups}, Proc. Japan Acad.45(8) (1969) 710-712.
		\bibitem{kehayo biideal} N. Kehayopulu, \textit{Note on bi-ideals in ordered semigroups} , Pure Math. Appl. 6 (1995), no.4, 333- 344.
		
		\bibitem{kehayo greens} N. Kehayopulu, \textit {Ideals and Green's relations in ordered semigroups}, InternationalJournal of Mathematics and Mathematical Sciences , (2006), 1-8, Article ID
		61286.
		\bibitem{kehayo poni} N. Kehayopulu, J. S. Ponizovskii, and M. Tsingelis, \textit {Bi-ideals in ordered semigroups and ordered groups}, Journal of Mathematical Sciences, 112(4), 2002.
		\bibitem{tsingelis} N. Kehayopulu and M. Tsingelis, \textit{ On Left Regular Ordered Semigroups}, South- east Asian Bulletin of Mathematics, 25(2002), 609615.
		\bibitem{mallick}S. Mallick and K. Hansda, \textit{ On the semigroup of bi-ideals of an ordered semigroup},	Kragujevac Journal of Mathematics, Volume 47(3) (2023), Pages 339 - 345.
		\bibitem{kss} K. S. S. Nambooripad, \textit {Theory of Regular Semigroups}, Sayahna Foundation, Trivandrum, 2018.
		
	\end{thebibliography}
	\end{document}